\documentclass{amsart}
\usepackage{graphicx,amsfonts,mathrsfs,amssymb,tikz-cd} 
\usepackage{xfrac,comment}
\usepackage[normalem]{ulem}

\usepackage[
  colorlinks=true,
  citecolor=green,  
  linkcolor=blue,  
  urlcolor=black    
]{hyperref}  

\newcommand{\A}{\mathbb{A}}
\newcommand{\C}{\mathbb{C}}
\newcommand{\D}{\mathbb{D}}

\newcommand{\N}{\mathbb{N}}
\newcommand{\bk}{\mathbf{k}}
\newcommand{\calC}{\mathcal{C}}

\newcommand{\I}{\mathcal{I}}

\newcommand{\End}{\mathrm{End}}

\newcommand{\Ext}{\mathrm{Ext}}
\newcommand{\Tor}{\mathrm{Tor}}
\newcommand{\HH}{\mathrm{HH}}
\newcommand{\HC}{\mathrm{HC}}

\theoremstyle{plain}
\newtheorem{Thm}{Theorem}[section]

\newtheorem{Cor}[Thm]{Corollary}
\newtheorem{Prop}[Thm]{Proposition}
\newtheorem{Thm-Def}[Thm]{Theorem-Definition}

\theoremstyle{definition}
\newtheorem{Def}[Thm]{Definition}
\newtheorem{Ex}[Thm]{Example}

\numberwithin{equation}{section}

\newcommand{\multiset}[2]{\genfrac{\{}{\}}{0pt}{1}{#1}{#2}}

\newcommand{\can}[1]{\textcolor{red}{#1 } }

\title{Rigidity of  higher Auslander algebras of type $\mathbb{A}$
}

\date{}

\begin{document}

\begin{abstract}
    We show that the Hochschild cohomology of higher Auslander algebras of type $\mathbb{A}$ vanishes in all positive degrees and is one-dimensional in degree zero. As a consequence,  these algebras are rigid and intrinsically formal.
\end{abstract}

\author{Sibylle Schroll}
\address{Department Mathematik, Universit\"at zu K\"oln, Weyertal 86-90, 50931 K\"oln, Germany}
\email{schroll@math.uni-koeln.de}

\author{Andrea Solotar}
\address{  IMAS-CONICET and Departamento de Matemática, Facultad de Ciencias Exactas y Naturales, Universidad de Buenos Aires, Pabellon I, Ciudad Universitaria, Buenos Aires, 1428, Argentina and Guangdong Technion Israel Institute of Technology, Shantou, Guangdong Province, China.} 
\email{asolotar@dm.uba.ar }

\author{Can Wen}
\address{School of Mathematical Sciences, Laboratory of Mathematics and Complex Systems, Beijing Normal University, Beijing 100875, China and Department Mathematik, Universit\"at zu K\"oln, Weyertal 86-90, 50931 K\"oln, Germany.
}
\email{cwen@mail.bnu.edu.cn}

\maketitle

\section{Introduction}

Let $\bk$ be a field and $A$ be a finite dimensional $\bk$-algebra. The classical Auslander correspondence \cite{Auslander71} is given by the bijection 
\[
\begin{array}{ccc}
\left\{\hspace{-0.2em}
\begin{array}{c}
   \text{finite dimensional $\bk$-algebras} \\
   \text{of finite representation type}
\end{array}
\hspace{-0.2em}\right\}
&
\longleftrightarrow
&
\left\{\hspace{-0.2em}
\begin{array}{c}
   \text{finite dimensional $\bk$-algebras $\Gamma$} \\
   \text{with $\operatorname{gl.dim}\Gamma\le 2\le \operatorname{dom.dim}\Gamma$}
\end{array}
\hspace{-0.2em}\right\}
\\[1.2em]
A
&
\longmapsto
&
\Gamma_A := \End_A(M)
\end{array}
\]
where both classes of algebras are considered up to Morita equivalence, and $M$ is an additive generator of the module category mod-$A$ of $A$. The algebra $\Gamma_A$ is called the {\it Auslander algebra} of $A$. 
In \cite{Iyama07a}, Iyama introduced the class of {\it higher Auslander algebras} in order to generalise the Auslander correspondence to higher dimensions. 
Following \cite{Iyama07a}, the $d$-dimensional Auslander correspondence can be described as the following bijection up to Morita equivalence
\[
\begin{array}{ccc}
\left\{\hspace{-0.2em}
\begin{array}{c}
   \text{finite maximal $d$-orthogonal} \\
   \text{subcategories $\calC$ of mod-$A$}
\end{array}
\hspace{-0.2em}\right\}
&
\longleftrightarrow
&
\left\{\hspace{-0.2em}
\begin{array}{c}
   \text{finite dimensional $\bk$-algebras $\Gamma$} \\
   \text{with $\operatorname{gl.dim}\Gamma\le d\le \operatorname{dom.dim}\Gamma$}
\end{array}
\hspace{-0.2em}\right\}
\\[1.2em]
\calC
&
\longmapsto
&
\Gamma_A := \End_A(M)
\end{array}
\]
where $M$ is an additive generator of $\calC$. It was shown in \cite{Iyama07b} that Auslander-Reiten theory is a special case for $d=2$ of this $d$-dimensional Auslander-Reiten theory.

We now focus on 
the class of {\it higher Auslander algebras of (Dynkin) type $\A$}, originally introduced in \cite{Iyama11}, and further studied  in \cite{OppermannThomas12,JassoKulshammer19,HerschendJorgensen21,DyckerhoffJassoLekili21}.
This class 
has a rich combinatorial structure and has been intensively investigated in recent years, see, for example \cite{HerschendIyama11,IyamaOppermann11,DyckerhoffJassoWalde19}, where they are also referred to as {\it $d$-representation finite algebras of type $\A$}. 

On the other hand, as shown in \cite{DyckerhoffJassoLekili21}, higher Auslander algebras of type $\A$ are  also related to  symplectic geometry as follows. 
Let $\mathcal{W}(\mathrm{Sym}^d(\D),\Lambda_n^{(d)})$ be the ($\mathbb{Z}$-graded) partially wrapped Fukaya category of the $d$-fold {\it symmetric product} of the $2$-dimensional unit disk with finitely many stops on its boundary. That is, let $\D \subset \C$ be the closed unit disk and let $\Lambda_n\subset \partial\D$ be the subset of $(n+1)$-th roots of unity and let $\Lambda_n^{(d)}=\bigcup_{p\in \Lambda_n}p+\mathrm{Sym}^{d-1}(\D)$ be the stops of the  $d$-th symmetric power $\mathrm{Sym}^d(\D)$ of $\D$.
Then there is a quasi-equivalence of triangulated $A_{\infty}$-categories \[ \mathrm{Perf}(A_{n,d})\xrightarrow{\quad\simeq\quad} \mathcal{W}(\mathrm{Sym}^d(\D),\Lambda_n^{(d)}) \]
 where $ \mathrm{Perf}(A_{n,d})$ denotes the perfect derived $A_{\infty}$-category of the higher Auslander algebra $A_{n,d}$. 
 
Since the global dimension of $A_{n,d}$ at most $d$, it follows that  $\HH^i(A_{n,d})=0$ and $\HH_i(A_{n,d})=0$, for each $i>d$. It is then natural to ask what happens in degrees less than or equal to $d$. 

In this short note, we  answer this question: the zero-th Hochschild cohomology is isomorphic to the base field  and the Hochschild cohomology vanishes in all positive degrees, generalising the analogous result for Auslander algebras in \cite{Happel90} to the case of higher Auslander algebras of type $\A$. We also deduce from the shape of the quiver of these algebras that they have Hochschild homology concentrated in degree zero and that their cyclic homology is zero in all odd degrees. More precisely, we show
the following.

\begin{Thm}
    Let $n, d \geq 1$ be integers and let $A_{n,d}$ be a higher Auslander algebra of type $\A$. Set $C^d_n = \frac{n!}{d!(n-d)!}$. Then the following hold

    \begin{enumerate}
        \item   the Hochschild cohomology of $A_{n,d}$ is
        \[\HH^i(A_{n,d})=\begin{cases}
        \bk & \text{ if $i=0$},\\ 0 & \text{ if $i\ge 1$};
    \end{cases}\]
    \item the Hochschild homology of $A_{n,d}$ is
    \[\HH_i(A_{n,d})=\begin{cases}
        \bk^{C_n^d} & \text{if $i=0$},\\ 0 & \text{if $i\ge 1$};
    \end{cases}\]
    
    \item the cyclic homology of $A_{n,d}$ is  \[\HC_i(A_{n,d})=\begin{cases}
        \bk^{C_n^d} & \text{if $i$ is even},\\ 0 & \text{if $i$ is odd}.
    \end{cases}\]
    \end{enumerate}
\end{Thm}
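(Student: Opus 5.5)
Parts (2) and (3) follow formally from the quiver and finite global dimension; part (1) is the real content. For (2), $A_{n,d}=\bk Q/I$ has an acyclic quiver $Q$ (its vertices are the $d$-element subsets of $\{1,\dots,n+d-1\}$, equivalently increasing sequences, with arrows raising one entry by one) and finite global dimension. By Keller's theorem (equivalently Cibils' computation for triangular algebras), $\HH_*$ of a finite dimensional algebra of finite global dimension over a field is determined on $K_0$: $\HH_i=0$ for $i\ge 1$ and $\HH_0=A/[A,A]$. Since $Q$ has no oriented cycles, $A/[A,A]\cong \bk^{Q_0}$. Counting vertices, the number of $d$-subsets of the relevant index set, gives $C_n^d$ in the indexing convention of the statement; I would fix the convention for $A_{n,d}$ so that the count matches. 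For (3), the Connes long exact sequence $\cdots\to\HH_i\to\HC_i\to\HC_{i-2}\to\HH_{i-1}\to\cdots$ with $\HH$ concentrated in degree $0$ gives $\HC_{i}\cong\HC_{i-2}$ for $i\ge 2$, together with $\HC_0=\HH_0$ and $\HC_1=0$. Induction then yields (3).

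For (1), I would proceed by induction on $d$ (and on $n$), using Iyama's cone construction: $A_{n,d+1}\cong\End_{A_{n,d}}(M)$, where $M$ is the $d$-cluster tilting module. More usefully, $A_{n,d}$ can be realized iteratively as a one-point (co)extension. Removing a source vertex $x$ of $Q$ gives $A=\begin{pmatrix}\bk & 0\\ N & B\end{pmatrix}$, where $B=eAe$ is again of the same combinatorial type (a convex subquiver, which is again a higher Auslander-type algebra or a quotient of one of smaller parameters). For such a one-point extension, Happel's long exact sequence
\[
0\to\HH^0(A)\to\HH^0(B)\to\End_B(N)/\bk\to\HH^1(A)\to\HH^1(B)\to\Ext^1_B(N,N)\to\HH^2(A)\to\cdots
\]
holds. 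It therefore suffices to show that $B$ is rigid with $\HH^0=\bk$, and that $N$ is exceptional: $\End_B(N)=\bk$ and $\Ext^i_B(N,N)=0$ for $i\ge1$. The sequence then gives $\HH^0(A)=\bk$ and $\HH^i(A)=0$ for $i\ge 1$.

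So the plan is to choose a shelling order of the vertices, for instance lexicographic on increasing sequences, so that each intermediate algebra $A_k$ (the full subcategory on the first $k$ vertices) has the projective $N_k=\mathrm{rad}\,P_x$ exceptional over $A_{k-1}$. The base case is $\bk$. This requires the convex subcategories to be tractable, and I would use that they are again interval-type algebras whose relations are the commutativity relations in the grid plus zero relations at the boundary. The main obstacle is verifying exceptionality of $\mathrm{rad}\,P_x$ over the truncated algebra. I would first try to show that $\mathrm{rad}\,P_x$ is a thin module supported on an order-convex box, hence an interval module. I would then compute $\Ext$ between such modules via the explicit Koszul-type projective resolution, in which the $j$-th term is the sum of $P$'s indexed by $j$-subsets of the coordinates that can be increased; this resolution is a truncated Koszul complex of a box. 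Self-extensions of an interval module then reduce to the homology of a simplex, which is acyclic, and hence vanish. If this fails for some ordering, the fallback is Happel's result that $\HH^*$ of a triangular algebra is computed by the relative resolution over $\bk Q_0$, combined with a direct combinatorial computation of $\Ext^*_{A^e}(A,A)$ along the same lines.
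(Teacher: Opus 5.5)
Parts (2) and (3) follow the paper's argument: no oriented cycles gives $\HH_{\ge 1}=0$ and $\HH_0=\bk^{Q_0}$ (Cibils), and the SBI sequence gives (3). One correction there. The vertices of the quiver of $A_{n,d}$ are the $d$-element subsets of $\{1,\dots,n\}$ (Definitions~\ref{def2}, \ref{def3}), so the count is exactly $C_n^d$. The $d$-subsets of $\{1,\dots,n+d-1\}$ you describe correspond to the multisets of Definition~\ref{def1}, before the degenerate idempotents are killed. No convention needs adjusting.

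For (1) you take a genuinely different route. The paper uses Gottesman's derived equivalence between $A_{n,d}$ and $\I(J_{n-d,d})$, then Gatica--Redondo's vanishing for incidence algebras of posets with a unique extremal element. Your architecture is sound: peel off sources, apply Happel's one-point extension sequence, and induct along the chain starting at $\bk$. If you remove vertices in increasing lexicographic order, the remaining vertex sets are upsets $U$. Resolutions of modules supported on $U$ then stay in $U$, so $\Ext$ over $e_UAe_U$ equals $\Ext$ over $A$. Drop the claim that the truncations are higher Auslander algebras with smaller parameters; they are not, and you do not need it.

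The gap is the decisive step, exceptionality of $\operatorname{rad}P_x$. You leave it open, and the mechanism you sketch for it is false.
\begin{itemize}
\item $P_x$ is thin on the box $\prod_i[x_i,x_{i+1}-1]$ (with $x_{d+1}=n+1$), so $\operatorname{rad}P_x$ is the box minus its minimal corner, not a box. In $A_{5,2}$, $\operatorname{rad}P_{13}$ is supported on $\{14,15,23,24,25\}$.
\item The resolution is not the Koszul complex on the coordinates that can be increased. For $x=23$ only $24$ is one step up, yet $0\to P_{34}\to P_{24}\to\operatorname{rad}P_{23}\to 0$. The zero relation $23\to 24\to 34$ creates a term indexed by $x+e_1+e_2$, although $x+e_1=33$ is not a vertex. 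So your complex is not a resolution.
\end{itemize}

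Here is a repair. The minimal projective resolution of $S_x$ is the Koszul complex of the multiset lattice with degenerate terms deleted. Its terms are $P_{x+e_S}$ for all $S\subseteq\{1,\dots,d\}$ such that $x+e_S$ is strictly increasing with entries $\le n$. The relation $d^2=0$ uses both the commutativity and the zero relations.
\begin{itemize}
\item Exactness is checked vertexwise. Set $Z=\{i:y_i=x_i\}$ and $O=\{i+1:y_i=x_{i+1}\}$. At a vertex $y\neq x$, either no $S$ contributes, or the contributing $S$ form the Boolean interval $O\subseteq S\subseteq\{1,\dots,d\}\setminus Z$, which has positive length.
\item Let $D(x)=\{i:x_{i+1}-x_i\ge 2\}$. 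Then $\operatorname{Hom}(P_{x+e_S},\operatorname{rad}P_x)\neq 0$ exactly for $\emptyset\neq S\subseteq D(x)$.
\item So applying $\operatorname{Hom}(-,\operatorname{rad}P_x)$ to the truncated resolution gives the cochain complex of the full simplex on $D(x)$.
\end{itemize}
Hence $\End(\operatorname{rad}P_x)=\bk$ and all higher self-extensions vanish for every non-maximal $x$. This is the simplex computation you anticipated, but it is valid only with the correct resolution, whose extra terms happen to contribute nothing.

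Repaired this way, your proof avoids the derived equivalence, Gatica--Redondo, and derived invariance of $\HH^*$. It also proves more: every upset truncation is rigid, and $A_{n,d}$ is an iterated one-point extension by exceptional modules. The paper's proof is much shorter but rests on those external results.
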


We conclude that higher Auslander algebras of type $\A$  are  rigid, that is, they do not admit non-trivial deformations and furthermore, the absence of non-trivial Hochschild cohomology in positive degrees also shows that these algebras are  intrinsically formal.

The next natural step up in complexity in terms of symmetric products is to consider symmetric products of an annulus with stops. This is precisely what is done in \cite{LuWang26}, where the partially wrapped Fukaya category of the two-fold symmetric product of an annulus with stops is considered. It is shown that the dg endomorphism algebra of a specific generator is formal exactly
when one of the two boundaries contains only one stop and to have a non-trivial $A_\infty$-structure in all other cases. Furthermore, the Hochschild cohomology is calculated and found to be non-trivial in higher degrees (including degree 2), showing that these $A_\infty$-algebras, contrary to the disc case, are no longer rigid but admit non-trivial $A_\infty$-deformations.

\section{Preliminaries}
In this section, we recall some definitions and basic facts which are necessary for computing of the Hochschild (co)homology of higher Auslander algebras of type $\A$. 

\subsection{Incidence algebras}\label{sec:inci-alg}

An {\it incidence algebra} $\I({\bf n})$ is a subalgebra of the algebra $M_n(\bk)$ of square matrices over $\bk$ with elements $(x_{ij})\in M_n(\bk)$ satisfying $x_{ij}=0$ if $i\nleq j$, for some partial order $\leq$ defined in the {\it poset} (partially ordered set) ${\bf n}=\{1,\cdots,n\}$. 

Any incidence algebra can be viewed as a {\it quiver algebra}, that is, a finite dimensional algebra isomorphic to the quotient of a path algebra modulo an admissible ideal. This equivalence between finite posets and ordered quivers can be described as follows. 

Let $Q$ be a finite quiver with a set of vertices $Q_0$ and a set of arrows $Q_1$. Recall that an {\it ordered quiver} is a finite quiver without oriented cycles and such that for each arrow $i \xrightarrow{\alpha} j \in Q_1$ there is no oriented path other than $\alpha$ from $i$ to $j$. For each ordered quiver $Q$, by setting a partial order on the set $Q_0$ of vertices of $Q$ as $i\geq j$ if and only if there exists an oriented path from $i$ to $j$, we can view $Q_0$ as a finite poset. Conversely, if $Q_0$ is a finite poset then we can construct an ordered quiver $Q$ as the Hasse diagram of $Q_0$.

Let $\bk Q$ be the path algebra of $Q$ and $I$ be the {\it parallel ideal} of $\bk Q$, that is, $I$ is the two-sided ideal of $\bk Q$ generated by all the differences $p-q$ where $p,q$ are parallel paths. We call the algebra $A=\bk Q/I$ the {\it incidence algebra} of the poset associated to the ordered quiver $Q$.

\subsection{Higher Auslander algebras of type $\A$}

In this subsection we give several definitions the higher Auslander algebras of type $\mathbb{A}$. We begin with the following definition given in \cite{DyckerhoffJassoLekili21}. 

Given natural numbers  $n$ and $d$, we define the poset 
\[
\multiset{{\bf n}}{d}=\{\Sigma=\{\sigma_1,\cdots,\sigma_d\}\in \N^d\mid 1\le \sigma_1\le\cdots\le \sigma_d\le n\}
\]
to be the poset of $d$-element multisubsets of ${\bf n}=\{1,\cdots,n\}$, where $\Sigma\le \Sigma'$ if for each $1\le i\le d$ the inequality $\sigma_i\le\sigma'_i$ holds.  We can view $\Sigma\in \multiset{\bf n}{d}$ as a {\it monotonic function} 
\[\begin{array}{ccc}
    f:\{1,\cdots,d\} & \longrightarrow & \{1,\cdots,n\} \\
     i & \mapsto & \sigma_i
\end{array}\]
with $\{f(1),\cdots,f(d)\} \in \multiset{\bf n}{d}$. In this sense, $\multiset{\bf n}{d}$ is identified with the poset of monotonic functions equipped with the usual partial order. That is, \[\multiset{\bf n}{d}=\{f:{\bf d}\to {\bf n} \mid f \text{ is a monotonic function}\}.\]
Let $\multiset{\bf n}{d}^{\mathrm{b}}\subseteq \multiset{\bf n}{d}$  be the subset whose elements  $\Sigma\in\multiset{\bf n}{d}$ are such that there exists an index $1\le i<d$ with $\sigma_i=\sigma_{i+1}$. Let 
$\binom{\bf n}{d}$ be the complement of $\multiset{\bf n}{d}^{\mathrm{b}}$ in $\multiset{\bf n}{d}$. We remark that the subset $\binom{\bf n}{d}$ can be identified with the subposet of $\multiset{\bf n}{d}$ spanned by the {\it injective} monotonic functions.  

Now, we are ready to give the first definition of  {\it higher Auslander algebra of type $\A$}. 

\begin{Def}\label{def1}
    Set  
\begin{equation*}
    A_{n,d}:=\frac{\bigoplus_{\Sigma\le\Gamma\in\multiset{\bf n}{d}}\ \bk f_{\Gamma\Sigma}}{\langle f_{\Delta\Delta}\mid \Delta\in \multiset{\bf n}{d}^{\mathrm{b}} \rangle}
\end{equation*}
with multiplication of basis elements given by \[ f_{\Delta\Gamma'}\cdot f_{\Gamma\Sigma}=\begin{cases}
    f_{\Delta\Sigma} & \text{ if $\Gamma=\Gamma'$,}\\ 0 & \text{otherwise.}
\end{cases}\]
\end{Def}

Alternatively, we can define  $A_{n,d}$ as the quotient of the incidence $\bk$-algebra of the poset $\multiset{\bf n}{d}$ by the two-sided ideal generated by the idempotents $f_{\Sigma\Sigma}$, for all $\Sigma\in \multiset{\bf n}{d}^b$. It follows directly from the definition that the $\bk$-algebra $A_{n,d}$ vanishes if $n<d$ and that it is isomorphic to the base field $\bk$ if $n=d$. For $2<d\le n$, it was shown in \cite{Iyama11} that  \[\operatorname{gl.dim}A_{n,d}\le d\le \operatorname{dom.dim}A_{n,d}.\] Moreover, it is straightforward to see that $A_{n,1}$ is the path algebra of the linearly oriented $\mathbb{A}_n$-quiver. Furthermore, the algebras $A_{n,d}$ can also be defined recursively, namely,  $A_{n+1,d+1}$ is the Auslander algebra of $A_{n,d}$.

As pointed out in \cite{DyckerhoffJassoLekili21}, the above definition of {\it higher Auslander algebras of type $\A$} is equivalent to the definition in \cite{OppermannThomas12}. For completeness we recall it here. 

\begin{Def}\label{def2}
    The {\it $d$-dimensional Auslander algebra of type $\A_{n-d+1}$} is the (ungraded) $\bk$-algebra with underlying $\bk$-module 
    \begin{equation}
        A_{n,d}=\frac{\bigoplus_{\Sigma\le\Gamma\in\binom{\bf n}{d}}\ \bk f_{\Gamma\Sigma}}{\langle f_{\Gamma\Sigma}\mid \exists\ i:\gamma_i\ge \sigma_{i+1} \rangle}
    \end{equation}
    equipped with the product \[ f_{\Delta\Gamma'}\cdot f_{\Gamma\Sigma}=\begin{cases}
    f_{\Delta\Sigma} & \text{ if $\Gamma=\Gamma'$,}\\ 0 & \text{otherwise.}
\end{cases}\]
It follows that $A_{n,d}$ is a monomial quotient of the incidence algebra of the poset $\binom{\bf n}{d}$.
\end{Def}

We now recall yet another description of higher Auslander algebras of type $\mathbb{A}$, namely, in terms of quotients of path algebras of quivers as given in \cite{HerschendJorgensen21}. 

\begin{Def}\label{def3}
    Let $r$ and $s$ be two integers such that $r\ge 2$ and $s\ge 1$. 

Define the quiver $Q^{r,s}$ as follows \begin{align*}
        Q^{r,s}_0 & :=\{x=(x_0,x_1,\cdots,x_s)\mid 1\le x_0<x_1<\cdots<x_s\le r+s \}.\quad \end{align*}
Now, for $1\leq k \leq r+s$, set $\sigma_k: Q_0^{r,s} \to Q_0^{r,s}$ to be the partial function defined by 
    \[\sigma_k(x)=y \text{\ with\ } y_i=\begin{cases}
        x_i+1 & \text{if $k=x_i$},\\ x_i & \text{if $k\neq x_i$}.
    \end{cases}\]
Then    \begin{align*} Q^{r,s}_1 & :=\{\alpha_k^x:x\to \sigma_k(x)\mid 1\le k\le r+s \text{ and } x\in Q^{r,s}_0 \text{ such that } \sigma_k(x) \text{ is defined} \}.
    \end{align*} 

     Let $I_{r,s}$ be the ideal of the path algebra $\bk Q^{r,s}$ generated by the following relations 
    \[\rho_{kl}^x=\begin{cases}
        \alpha_l^{\sigma_k(x)}\alpha_k^x-\alpha_k^{\sigma_l(x)}\alpha_l^x & \text{if both $\sigma_k(x)$ and $\sigma_l(x)$ are defined}, \\ \alpha_l^{\sigma_k(x)}\alpha_k^x & \text{if $\sigma_k(x)$ is defined but $\sigma_l(x)$ is not defined}, \\ \alpha_k^{\sigma_l(x)}\alpha_l^x & \text{if $\sigma_l(x)$ is defined but $\sigma_k(x)$ is not defined}.
    \end{cases}\] for $1\le k,l\le r+s$ and $x\in Q_0^{r,s}$.
    
    \medskip
    
     Set  $A_r^s = \bk Q^{r,s}/I_{r,s}$.
\end{Def}

A straightforward comparison between Definitions \ref{def2} and \ref{def3} gives rise to the following proposition where for completeness we include a short proof. 
\begin{Prop}\label{def-equ}\label{prop:equ-def}
    Let $n\ge 2$ and $d\ge 1$ be two integers. With the above notation,  $A_{n,d}$ is isomorphic to $A_{n-d+1}^{d-1}=\bk Q^{n-d+1,d-1}/I_{n-d+1,d-1}$. 
\end{Prop}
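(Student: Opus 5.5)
We compare the presentation of Definition \ref{def2} with that of Definition \ref{def3}, setting $r=n-d+1$ and $s=d-1$, so that $r+s=n$. The vertex set $Q_0^{r,s}$ consists of strictly increasing tuples $(x_0<\cdots<x_s)$ in $\{1,\dots,n\}$ of length $s+1=d$. This is exactly $\binom{\bf n}{d}$, via $x\mapsto\Sigma=\{x_0+0,\dots\}$; more precisely, $\sigma_{i}=x_{i-1}$. Under this identification, $\sigma_k$ increases the entry equal to $k$ by one, and it is defined exactly when the result is still strictly increasing and bounded by $n$. So the arrows of $Q^{r,s}$ are the covering relations of the poset $\binom{\bf n}{d}$ for the componentwise order: $\Sigma\le\Gamma$ is a cover if and only if the two differ in one coordinate by one. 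In other words, $Q^{r,s}$ is the Hasse diagram of $\binom{\bf n}{d}$.

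First define an algebra map $\Phi:\bk Q^{r,s}\to A_{n,d}$. It sends the trivial path $e_x$ to $f_{xx}$ and the arrow $\alpha_k^x$ to $f_{\sigma_k(x)\,x}$. Any path from $x$ to $y$ maps to $f_{yx}$ in the incidence algebra, which then becomes zero or not in the quotient.

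Next check that $I_{r,s}\subseteq\ker\Phi$. A commutativity relation is a difference of two parallel paths of length two, and both paths map to the same element $f_{yx}$. For a zero relation, say $\sigma_k(x)$ is defined, $\sigma_l(x)$ is not, and $\sigma_l\sigma_k(x)$ is defined. This can only happen when $l=k+1$ and $x$ contains both $k$ and $k+1$ in consecutive positions $x_{i-1}=k$, $x_i=k+1$, or symmetrically. Then $\Gamma=\sigma_l\sigma_k(x)$ has $\gamma_{i}=k+1=\sigma_{i+1}$ in the indexing of Definition \ref{def2}. So $f_{\Gamma\Sigma}$ lies in the monomial ideal and the relation maps to zero. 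The case $l=k$ (increasing the same entry twice) also needs care: if $\sigma_k\sigma_k$ makes sense, then the relation $\rho_{kk}^x$ is trivial or zero. This index bookkeeping is routine but must be done carefully.

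Surjectivity holds because every $f_{\Gamma\Sigma}$ with $\Sigma\le\Gamma$ is the image of a path: one can raise coordinates one unit at a time, starting from the largest coordinate, while staying strictly increasing.

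The main obstacle is injectivity of the induced map $\bk Q^{r,s}/I_{r,s}\to A_{n,d}$. I would prove two things about paths modulo $I_{r,s}$:
\begin{enumerate}
\item Any two paths from $x$ to $y$ are equal modulo $I_{r,s}$, or both are zero. Every path corresponds to a sequence of unit coordinate increments. Any two such sequences are related by swapping adjacent steps, and each swap is either a commutativity relation or passes through a zero relation. For this I would use induction on the length together with a diamond-lemma style argument on the lattice of increment orders.
\item A path from $\Sigma$ to $\Gamma$ is zero in $\bk Q^{r,s}/I_{r,s}$ exactly when $\gamma_i\ge\sigma_{i+1}$ for some $i$. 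If this condition holds, then every route must at some point push coordinate $i$ onto a value that was previously occupied by coordinate $i+1$. Reordering the steps produces a subpath of the zero-relation type. Conversely, if no such $i$ exists, then the path is nonzero because its image $f_{\Gamma\Sigma}$ is nonzero.
\end{enumerate}

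Together these show that the quotient has a basis indexed by the surviving pairs $(\Sigma,\Gamma)$, so $\Phi$ induces the isomorphism. A dimension count finishes the argument. The delicate step is the reordering argument in (2), showing that a nonzero-looking path through a forced collision really factors through a zero relation.
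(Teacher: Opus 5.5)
Your plan is the same identification the paper makes: define $\Phi$, check $I_{r,s}\subseteq\ker\Phi$, prove surjectivity, show that $\bk Q^{r,s}/I_{r,s}$ is spanned by one path class per pair $(\Sigma,\Gamma)$ with $\gamma_i<\sigma_{i+1}$ for all $i$, and finish with a dimension count. It is more complete than the paper's sketch, which only asserts that the relations correspond and never addresses injectivity.

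\textbf{The gap is in the relation check.} You adopt, as Definition~\ref{def3} literally does, the value-based $\sigma_k$ (raise the entry equal to $k$). With that convention, the situation ``$\sigma_k(x)$ defined, $\sigma_l(x)$ undefined, $\sigma_l\sigma_k(x)$ defined'' arises in two ways, not one.
\begin{enumerate}
\item $l=k-1$ with $k-1,k\in x$ and $k+1\notin x$. This is the collision you describe. As you wrote it ($x_{i-1}=k$, $x_i=k+1$, $l=k+1$), $\sigma_k$ is undefined at $x$, so it only fits after exchanging $k$ and $l$.
\item $l=k+1$ with $k\in x$ and $k+1,k+2\notin x$. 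Here $\alpha_{k+1}^{\sigma_k(x)}\alpha_k^x$ simply moves one entry from $k$ to $k+2$.
\end{enumerate}
In the second case $\Phi(\rho^x_{k,k+1})=f_{yx}$ is nonzero in $A_{n,d}$, because $k+1,k+2\notin x$ forces $\gamma_j<\sigma_{j+1}$ for all $j$. Concretely, in Example~\ref{eg}, for $x=(1,2)$ one has $\rho^x_{2,3}=\alpha_3^{13}\alpha_2^{12}\in I_{4,1}$, while $f_{14,12}\neq 0$ in $A_{5,2}$.

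So $I_{r,s}\not\subseteq\ker\Phi$. With Definition~\ref{def3} read literally, $\bk Q^{r,s}/I_{r,s}$ is strictly smaller than $A_{n,d}$ and the statement fails. Your remark that ``increasing the same entry twice'' is the case $l=k$ is index-based reasoning. With values, $\sigma_k\sigma_k(x)$ is never defined, and the same-entry case is $l=k+1$.

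\textbf{The repair.} Read $\sigma_k$ as raising the $k$-th coordinate $x_k$, for $0\le k\le s$. Equivalently, keep only the collision monomial relations. This is what Example~\ref{eg} describes (``paths of length two with two vertices in the bottom row''), and what the paper's own proof tacitly uses. With that reading your check is correct, and your steps (1) and (2) are easier than you expect.

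For (1), suppose a path at a vertex $z$ raises coordinate $i$ and then coordinate $j$, with $i<j$. Then raising $j$ is already legal at $z$, since coordinate $j+1$ is untouched by the first step. So swapping the two steps is a commutativity relation. Bubble-sorting then shows that every path from $\Sigma$ to $\Gamma$ equals, modulo commutativity relations alone, the path raising the largest coordinate first. No diamond lemma is needed.

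For (2), suppose $\gamma_i\ge\sigma_{i+1}$. Raise coordinates $d,\dots,i+2$ to their final values, then coordinate $i$ up to $\sigma_{i+1}-1$. Then take one step in coordinate $i+1$ followed by one step in coordinate $i$. This subpath is a collision relation, and the path can then be completed to $\Gamma$. Together with surjectivity of $\Phi$, this gives the isomorphism.
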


\begin{proof}
By Definition \ref{def2}, we can view the quiver $Q$ of $A_{n,d}$ as follows: the vertices are given by the tuples $\Sigma$ in the poset $\binom{\bf n}{d}$, and the arrows are given by $f_{\Gamma\Sigma}:\Sigma\to\Gamma$ for $\Sigma\le\Gamma$ such that there is no $\Delta$ satisfying $\Sigma\le\Delta\le\Gamma$ in $\binom{\bf n}{d}$. Clearly, $Q_0$ is in bijection with $Q_0^{n-d+1,d-1}$. Moreover, we can identify $Q_1$ with $Q_1^{n-d+1,d-1}$. It suffices to show they have the same relations. In fact, with the above identification, we can view the multiplication law $f_{\Delta\Gamma'}\cdot f_{\Gamma\Sigma}$ in Definition \ref{def2} as the concatenation of paths in its associated quiver $Q$. This  gives rise to the non-monomial relations of the ideal $I_{n-d+1,d-1}$ in Definition \ref{def3}, whereas the  ideal \sloppy $\langle f_{\Gamma\Sigma}\mid \exists\ i:\gamma_i\ge \sigma_{i+1} \rangle$  in Definition \ref{def2} corresponds to the ideal generated by the zero relations  of the ideal $I_{n-d+1,d-1}$ in Definition \ref{def3}. We thereby get the statement. 
\end{proof}

Let $J_{m,n}$ be a set consisting of non-decreasing sequences of length $m$. That is, \[J_{m,n}:=\{(a_1,\cdots,a_m)\mid 0\le a_1\le \cdots\le a_m\le n \}. \] Note that, $J_{m,n}$ can be viewed as a poset equipped with termwise comparison. The following theorem shows that these posets are closely related to the higher Auslander algebras of type $\A$.

\begin{Thm}{\rm (\cite[Theorem E]{Gottesman26})}\label{thm:der-equ}
    The incidence algebra of the poset $J_{n-d,d}$ is derived equivalent to the higher Auslander algebra $A_{n,d}$.
\end{Thm}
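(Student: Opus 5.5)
The plan is to realise the incidence algebra of $J_{n-d,d}$ as the endomorphism algebra of a tilting object in $D^b(\mathrm{mod}\,A_{n,d})$. First I would fix a bijection between the vertex sets. A subset $\Sigma\in\binom{\bf n}{d}$ corresponds to a lattice path in a $d\times(n-d)$ box, recording at each step whether $j\in\{1,\dots,n\}$ lies in $\Sigma$. Reading off, for each of the $n-d$ elements $j\notin\Sigma$, the number $a$ of elements of $\Sigma$ smaller than $j$ gives a non-decreasing sequence $(a_1,\dots,a_{n-d})$ with $0\le a_i\le d$, i.e.\ an element of $J_{n-d,d}$. This is a bijection $\binom{\bf n}{d}\to J_{n-d,d}$, so both algebras have $C^d_n$ simple modules. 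The sanity checks are consistent with this choice: for $d=1$ both sides are the path algebra of linearly oriented $\A_n$, and for $d=n$ both are $\bk$.

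Next, for each $a\in J_{n-d,d}$ I would define an $A_{n,d}$-module $T_a$, using the combinatorial description of Definition~\ref{def2}. The candidate is an ``interval'' module: a thin module whose support is a convex subset of $\binom{\bf n}{d}$ with a unique top at the vertex $\Sigma(a)$, cut out by the zero relations $\gamma_i\ge\sigma_{i+1}$. Alternatively, I would take $T_a$ to be the image under the appropriate power of the higher Auslander--Reiten translation $\tau_d^{-}$ of a projective. Set $T=\bigoplus_a T_a$. The aim is to show three things:
\begin{enumerate}
\item $\operatorname{Hom}(T_a,T_b)=\bk$ if $a\ge b$ and $0$ otherwise;
\item compositions of the nonzero maps are nonzero, so $\End(T)$ is the incidence algebra of $J_{n-d,d}$ rather than a proper quotient of it;
\item $\Ext^{i}(T,T)=0$ for $i>0$.
\end{enumerate}
Since $\operatorname{gl.dim}A_{n,d}$ is finite and $T$ has $C^d_n$ pairwise non-isomorphic indecomposable summands, Rickard's theorem then yields the derived equivalence once $T$ is shown to be tilting. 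For the generation part, counting summands suffices for a partial tilting module of projective dimension at most $d$ with the right number of summands.

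To organise the Ext computations I would argue by induction, using the recursion that $A_{n+1,d+1}$ is the Auslander algebra of $A_{n,d}$. Removing the last coordinate of a sequence gives a recollement-type decomposition of $J_{n-d,d}$. This decomposition would be matched with a decomposition of $\binom{\bf n}{d}$ according to whether $n\in\Sigma$. Each piece is of the form $\binom{{\bf n-1}}{d}$ or $\binom{{\bf n-1}}{d-1}$, and the inductive hypothesis applies to it. Gluing then reduces the problem to checking that the relevant bimodules correspond under the inductive equivalences.

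I expect the main obstacle to be the Ext vanishing together with the nonvanishing of compositions. The zero relations of $A_{n,d}$ kill long paths, whereas the incidence algebra has a nonzero map between every comparable pair. The equivalence therefore has to convert these zero relations into homological shifts. My first attempt would be to compute projective resolutions of the $T_a$ explicitly, using the $d$-cluster tilting structure (the Koszul-type resolutions coming from the commutativity squares in Definition~\ref{def3}), and to verify that the resulting Hom complexes are concentrated in degree zero. If this becomes unmanageable, I would fall back on the gluing argument above.
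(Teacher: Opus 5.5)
The paper does not prove this statement. It is quoted from \cite[Theorem E]{Gottesman26} and used as a black box in the proof of Theorem~\ref{thm:Hoch-cohomology}, so your proposal has to stand on its own. As written it is a programme rather than a proof, and the gap is at its centre: the tilting object is never defined. ``A thin module whose support is a convex subset of $\binom{\bf n}{d}$ with unique top at $\Sigma(a)$, cut out by the zero relations'' does not say which convex set. ``An appropriate power of $\tau_d^{-}$ applied to a projective'' does not say which projective or which power. Consequently none of (1)--(3) is verified, although these three claims are the entire content of the theorem. The bijection $\binom{\bf n}{d}\to J_{n-d,d}$ and the checks for $d=1$ and $d=n$ are correct, but they are only bookkeeping.

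The choice of summands genuinely matters. Write $P_\Sigma, S_\Sigma$ for the projective and simple at $\Sigma$, and $12$ for $\{1,2\}$ as in Example~\ref{eg}. Then $A_{3,2}$ is the linear quiver $12\to 13\to 23$ modulo $\mathrm{rad}^2$. Both $P_{12}\oplus P_{13}\oplus S_{13}$ and $P_{12}\oplus P_{13}\oplus S_{12}$ are tilting. Only the first has endomorphism algebra $\bk\A_3\cong\I(J_{1,2})$; the second gives back $A_{3,2}$.

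Your fallback induction relocates the difficulty rather than removing it. It is true that $A_{n,d}$ and $\I(J_{n-d,d})$ are triangular matrix algebras: the first is glued from $A_{n-1,d}$ and $A_{n-1,d-1}$ (according to whether $n\in\Sigma$), the second from $\I(J_{n-d-1,d})$ and $\I(J_{n-d,d-1})$. However, the inductive hypothesis only provides \emph{some} derived equivalences between the pieces. Gluing requires specific equivalences that carry one gluing bimodule to the other in the derived category of bimodules. You would have to strengthen the induction so that it produces explicit tilting complexes and controls their effect on these bimodules, and that compatibility is exactly what is left unchecked.

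Separately, the generation step is not justified. The statement that a partial tilting module with the right number of indecomposable summands is tilting is Bongartz's lemma, and it is specific to projective dimension at most one. For higher projective dimension, partial tilting modules need not even be direct summands of tilting modules (Rickard--Schofield). So when $\operatorname{pd}T$ may be as large as $d$, you cannot conclude that $T$ is tilting by counting summands. You must either arrange $\operatorname{pd}T\le 1$, as for the good module in the example above, or show directly that every indecomposable projective $A_{n,d}$-module lies in $\mathrm{thick}(T)$, for instance by exhibiting a finite $\mathrm{add}\,T$-coresolution of $A_{n,d}$. Only after that do Happel's and Rickard's theorems yield the derived equivalence.
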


We illustrate the above definitions in an example. 

\begin{Ex}\label{eg}
Let $n=5$ and $d=2$. 
\begin{itemize}
\item $A_{5,2}$ defined in Definition (\ref{def1}). There are $|\multiset{\bf n}{d}|=C_{n+d-1}^d=15$ vertices.
\[\begin{tikzcd}[row sep=1em, column sep=1em]
&&&& 15 \\
&&& 14 && 25 \\
&& 13 && 24 && 35 \\
& 12 && 23 && 34 && 45 \\
11 && 22 && 33 && 44 && 55
\arrow[from=1-5, to=2-6]
\arrow[from=2-4, to=1-5]
\arrow[from=2-4, to=3-5]
\arrow[from=2-6, to=3-7]
\arrow[from=3-3, to=2-4]
\arrow[from=3-3, to=4-4]
\arrow[from=3-5, to=2-6]
\arrow[from=3-5, to=4-6]
\arrow[from=3-7, to=4-8]
\arrow[from=4-2, to=3-3]
\arrow[dashed, from=4-2, to=5-3]
\arrow[from=4-4, to=3-5]
\arrow[dashed, from=4-4, to=5-5]
\arrow[from=4-6, to=3-7]
\arrow[dashed, from=4-6, to=5-7]
\arrow[dashed, from=4-8, to=5-9]
\arrow[dashed, from=5-1, to=4-2]
\arrow[dashed, from=5-3, to=4-4]
\arrow[dashed, from=5-5, to=4-6]
\arrow[dashed, from=5-7, to=4-8]
\end{tikzcd}
\]

Then $A_{5,2}$ is the quotient of the path algebra of the above quiver by the ideal generated by all commutativity relations and the idempotents corresponding to the vertices labelled $kk$. Note that the latter correspond to the idempotents $f_{KK}$, for  $K\in\multiset{\bf n}{d}^{\mathrm{b}}$.

\item $A_{5,2}$ in Definition \ref{def2}. The number of vertices is given by $|\binom{\bf n}{d}|=C_n^d=10$.
\[\begin{tikzcd}[row sep=1em, column sep=1em]
	&&& 15 &&& \\
	&& 14 && 25 \\
	& 13 && 24 && 35 \\
	12 && 23 && 34 && 45
	\arrow[from=1-4, to=2-5]
	\arrow[from=2-3, to=1-4]
	\arrow[from=2-3, to=3-4]
	\arrow[from=2-5, to=3-6]
	\arrow[from=3-2, to=2-3]
	\arrow[from=3-2, to=4-3]
	\arrow[from=3-4, to=2-5]
	\arrow[from=3-4, to=4-5]
	\arrow[from=3-6, to=4-7]
	\arrow[from=4-1, to=3-2]
	\arrow[from=4-3, to=3-4]
	\arrow[from=4-5, to=3-6]
\end{tikzcd}\]

Then $A_{5,2}$ is the quotient of the path algebra of the above quiver by the ideal generated by all commutativity relations and the monomial relations corresponding to the paths of length two with two vertices in the bottom row. 

\item $A_4^1$ given by Definition \ref{def3}. There are $C_n^d=10$ vertices in $Q^{4,1}$.
\[\begin{tikzcd}[row sep=1.2em, column sep=1.2em]
	&&& 15 &&& \\
	&& 14 && 25 \\
	& 13 && 24 && 35 \\
	12 && 23 && 34 && 45
	\arrow["{\alpha_1^{15}}", from=1-4, to=2-5]
	\arrow["{\alpha_4^{14}}", from=2-3, to=1-4]
	\arrow["{\alpha_1^{14}}", from=2-3, to=3-4]
	\arrow["{\alpha_2^{25}}", from=2-5, to=3-6]
	\arrow["{\alpha_3^{13}}", from=3-2, to=2-3]
	\arrow["{\alpha_1^{13}}", from=3-2, to=4-3]
	\arrow["{\alpha_4^{24}}", from=3-4, to=2-5]
	\arrow["{\alpha_2^{24}}", from=3-4, to=4-5]
	\arrow["{\alpha_3^{35}}", from=3-6, to=4-7]
	\arrow["{\alpha_2^{12}}", from=4-1, to=3-2]
	\arrow["{\alpha_3^{23}}", from=4-3, to=3-4]
	\arrow["{\alpha_4^{34}}", from=4-5, to=3-6]
\end{tikzcd}\]

Then $A_4^1$ is the quotient of the path algebra of the above quiver by the ideal generated by all commutativity relations and the monomial relations corresponding to the paths of length two with two vertices in the bottom row. 

\item The incidence algebra $\I(J_{3,2})$ of poset $J_{3,2}$. There are $C_n^d=10$ vertices in the Hasse diagram of $J_{3,2}$.
\[\begin{tikzcd}[row sep=1em, column sep=1em]
	&&& 111 &&& \\
	&& 011 && 112 \\
	& 001 && 012 && 122 \\
	000 && 002 && 022 && 222
	\arrow[from=1-4, to=2-5]
	\arrow[from=2-3, to=1-4]
	\arrow[from=2-3, to=3-4]
	\arrow[from=2-5, to=3-6]
	\arrow[from=3-2, to=2-3]
	\arrow[from=3-2, to=4-3]
	\arrow[from=3-4, to=2-5]
	\arrow[from=3-4, to=4-5]
	\arrow[from=3-6, to=4-7]
	\arrow[from=4-1, to=3-2]
	\arrow[from=4-3, to=3-4]
	\arrow[from=4-5, to=3-6]
\end{tikzcd}\]

Then $\I(J_{3,2})$ is the quotient of the path algebra of the above quiver by the ideal generated by all commutativity relations. Note that in this case there are no monomial relations in the ideal. 
\end{itemize} 

Thus we can clearly see in this example, that the first three  algebras are isomorphic while the fourth one is not isomorphic but it is derived equivalent to them. 
\end{Ex}

\section{Hochschild (co)homology of higher Auslander algebras of type A}

Recall that the Hochschild cohomology of an associative algebra $A$ with coefficients in an $A$-bimodule $M$ was originally introduced in \cite{Hochschild45} and later proved to be isomorphic to $\Ext_{A\otimes A^{op}}^*(A,M)$. As a consequence, the Hochschild (co)homology of a $\bk$-algebra $A$
can be computed from any projective resolution of $A$ over its enveloping algebra $A^e=A\otimes A^{op}$.
On the other hand, the Hochschild homology of $A$ with coefficients in $M$ is isomorphic to
$\Tor^{A^e}_*(M,A)$.

We are now ready to state  our main theorem.

\begin{Thm}\label{thm:Hoch-cohomology}
    Let $A_{n,d}$ be a higher Auslander algebra of type $\A$.  Then \[\HH^i(A_{n,d})=\begin{cases}
        \bk & \text{ if $i=0$},\\ 0 & \text{ if $i\ge 1$}.
    \end{cases}\]
\end{Thm}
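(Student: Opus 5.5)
The plan is to use Theorem \ref{thm:der-equ}: Hochschild cohomology is invariant under derived equivalence (Rickard, Happel), so it is enough to compute $\HH^*(\I(J_{n-d,d}))$. For incidence algebras of finite posets there is the classical result of Gerstenhaber--Schack (see also Cibils): for a poset $P$,
\[
\HH^i(\I(P)) \cong H^i(\Delta(P);\bk),
\]
the simplicial cohomology of the order complex $\Delta(P)$, whose simplices are the chains $p_0<p_1<\cdots<p_i$ in $P$. The proof therefore reduces to a topological statement: the order complex of $J_{n-d,d}$ is $\bk$-acyclic, i.e. $H^0=\bk$ and $H^i=0$ for $i\geq 1$.

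The second step is to prove this. The poset $J_{m,n}=\{(a_1,\dots,a_m)\mid 0\le a_1\le\cdots\le a_m\le n\}$ with termwise order has a unique minimal element $(0,\dots,0)$ and a unique maximal element $(n,\dots,n)$. A finite poset with a least element is a cone: every maximal chain can be extended by the minimum, so $\Delta(P)$ is a cone with apex $(0,\dots,0)$ and hence contractible. It follows that $H^0(\Delta(J_{n-d,d});\bk)=\bk$ and all higher cohomology vanishes. In Example \ref{eg} the minimum $000$ is visible as a source in the Hasse diagram.

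The degenerate cases need separate handling. For $n=d$ we have $A_{n,d}\cong\bk$, whose Hochschild cohomology is $\bk$ in degree $0$ and zero otherwise. For $d=1$, $A_{n,1}$ is the path algebra of linearly oriented $\A_n$, a tree, and the same conclusion follows from Happel's formula. We also need $\I(J_{n-d,d})$ to match the index conventions of \cite{Gottesman26}; the vertex counts agree, since $|J_{n-d,d}|=C^d_n$.

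Most of the difficulty is in quoting the two inputs with the right hypotheses rather than in any computation. The Gerstenhaber--Schack isomorphism holds for incidence algebras in the sense of Section \ref{sec:inci-alg}, namely path algebras of the Hasse diagram modulo the full parallel ideal, and $\I(J_{n-d,d})$ has exactly this form with no monomial relations. Derived invariance of $\HH^*$ requires a standard derived equivalence, i.e. a two-sided tilting complex. Over a field this is automatic for finite dimensional algebras by Rickard's theorem, but it should be checked that \cite[Theorem E]{Gottesman26} gives a $\bk$-linear triangle equivalence of bounded derived categories.

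As a fallback, if the derived equivalence were unavailable, I would work directly with $A_{n,d}$ via Definition \ref{def1}, as the quotient of the incidence algebra of $\multiset{\bf n}{d}$ by the idempotents indexed by $\multiset{\bf n}{d}^{\mathrm b}$. I would then apply a long exact sequence for idempotent quotients (a stratification or recollement argument), inducting on $n$ and $d$ and using the recursion that $A_{n+1,d+1}$ is the Auslander algebra of $A_{n,d}$.
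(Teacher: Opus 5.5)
Your proposal is correct and follows the paper's route: use Gottesman's derived equivalence to reduce to $\I(J_{n-d,d})$, then use that this poset has a unique extremal element to conclude that $\HH^i$ vanishes for $i\geq 1$. The only difference is how you justify that last step: the paper quotes Theorem 2.2 of Gatica--Redondo directly, whereas you derive it from the Gerstenhaber--Schack isomorphism $\HH^*(\I(P))\cong H^*(\Delta(P);\bk)$ together with the fact that $\Delta(P)$ is a cone on the least element $(0,\dots,0)$, which is slightly more self-contained and also gives $\HH^0=\bk$ at once.
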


\begin{proof}
   First notice that the quiver of $A_{n,d}$ contains no cycles which implies that  \sloppy ${\rm HH}^0(A_{n,d})=\bk$. 
    Furthermore, since Hochschild cohomology is invariant under derived equivalence, by 
    Theorem \ref{thm:der-equ} the algebra $A_{n,d}$ and the incidence algebra $\I(J_{n-d,d})$ of the poset $J_{n-d,d}$ have  isomorphic Hochschild cohomology. 
   By Theorem 2.2 in \cite{GaticaRedondo01}, if the set of vertices of an incidence algebra $A=\bk Q/I$ has a unique maximal or minimal element then $\HH^i(A)=0$, for all $i\ge 1$. 
   Recall that we can identify the incidence algebra $\I(\Sigma)$ of a poset $\Sigma$ with a quiver algebra $\bk Q/I$, where $Q$ is the Hasse diagram of $\Sigma$. So $Q_0$ has a unique maximal or minimal element if and only if $\Sigma$ does.
   Hence for an incidence algebra  $\I(\Sigma)$  of a poset $\Sigma$, if  $\Sigma$ has a unique maximal or minimal element, then we also have $\HH^i(\I(\Sigma))=0$, for all $i\ge 1$.
    Thus, as  
    the poset $J_{n-d,d}$ has a unique minimal element $(d,\cdots,d)$ and a unique maximal element $(0,\cdots,0)$, the result follows.
\end{proof}

Since the quiver of $A_{n,d}$ has no oriented cycles, all higher Hochschild homology groups are zero and the only non-zero homology is in degree zero \cite{Cibils86}, see also \cite{Han06}. 

\begin{Thm}\label{thm:Hoch-homology}
Let $A_{n,d}$ be a higher Auslander algebra of type $\A$. Denote by $C_n^d$  the binomial coefficient. Then 
 the Hochschild homologies of $A_{n,d}$ are given by \[\HH_i(A_{n,d})=\begin{cases}
        \bk^{C_n^d} & \text{if $i=0$},\\ 0 & \text{if $i\ge 1$};
    \end{cases}\]
\end{Thm}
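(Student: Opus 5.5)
We use the fact that for a finite dimensional algebra $A=\bk Q/I$ whose quiver $Q$ has no oriented cycles, the Hochschild homology is concentrated in degree zero and $\HH_0(A)\cong \bk^{|Q_0|}$ (\cite{Cibils86}, see also \cite{Han06}). The plan has three steps: verify that the quiver of $A_{n,d}$ is acyclic, count its vertices, and recall why acyclicity forces the vanishing.

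\emph{Acyclicity and vertex count.} Work with the presentation of Definition \ref{def2}, or equivalently Definition \ref{def3} via Proposition \ref{prop:equ-def}. The vertices of the quiver of $A_{n,d}$ are the elements of the poset $\binom{\bf n}{d}$, and every arrow $\Sigma\to\Gamma$ satisfies $\Sigma<\Gamma$ strictly in this poset. Hence every nontrivial oriented path strictly increases, for instance, the sum $\sigma_1+\cdots+\sigma_d$, so there are no oriented cycles. The number of vertices is $|\binom{\bf n}{d}|=C_n^d$, the number of $d$-element subsets of ${\bf n}$.

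\emph{Degree zero.} We have $\HH_0(A)=A/[A,A]$. Take the basis of $A$ given by the nonzero classes $f_{\Gamma\Sigma}$. Each $f_{\Gamma\Sigma}$ with $\Gamma\neq\Sigma$ equals the commutator $[f_{\Gamma\Sigma},f_{\Sigma\Sigma}]$, since $f_{\Sigma\Sigma}f_{\Gamma\Sigma}=0$ when $\Gamma\ne\Sigma$. Conversely, modulo $[A,A]$ the trace functional sending $f_{\Gamma\Sigma}\mapsto\delta_{\Gamma\Sigma}$ on each idempotent coordinate vanishes on commutators. This is because a product of two basis elements lands on a diagonal idempotent only when both factors are idempotents, by acyclicity. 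Therefore $\HH_0(A_{n,d})\cong\bigoplus_\Sigma \bk e_\Sigma\cong \bk^{C_n^d}$.

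\emph{Higher degrees.} This is the step that needs an argument rather than a count; we would either cite Cibils or argue directly as follows. Let $E=\bk^{Q_0}$ be the semisimple subalgebra spanned by the vertex idempotents. Compute $\HH_*(A)$ by the $E$-relative (reduced) Hochschild complex, with chains $A\otimes_{E^e}(\mathrm{rad}A)^{\otimes_E m}$ (the tensor products over $E$ made cyclic). A nonzero elementary tensor in degree $m\ge1$ would give a closed sequence of nonzero radical elements $e_{x_0}ae_{x_1}r_1e_{x_2}\cdots r_me_{x_0}$. This corresponds to an oriented cycle of length $\ge m\ge1$ in $Q$, which is impossible. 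So the complex vanishes in positive degrees and $\HH_i(A_{n,d})=0$ for $i\ge1$.

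The only mild obstacle is justifying that relative chains over the separable subalgebra $E$ compute ordinary Hochschild homology. This is standard, since $E$ is separable over $\bk$.
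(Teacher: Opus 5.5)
Your proposal is correct and follows the paper's route: the paper likewise observes that the quiver of $A_{n,d}$ has no oriented cycles, invokes Cibils' result, and takes $C_n^d=|\binom{\bf n}{d}|$ as the vertex count. Your computation of $\HH_0$ via the trace functionals, and your vanishing argument via the $E$-relative reduced complex (valid because $E=\bk^{Q_0}$ is separable), simply spell out what that citation provides.
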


Using  the {\it homological Connes' periodicity exact sequence}, also called the SBI sequence, in \cite{Cibils86}, the cyclic homology of an algebra of a quiver with relations, where the quiver has no oriented cycles, is given. Applying this result, the cyclic homology of higher Auslander algebras is as follows.

\begin{Cor}\label{cor:cyclic-homology}
Let $A_{n,d}$ be a higher Auslander algebra of type $\A$. Denote by $C_n^d$  the binomial coefficient. Then 
 the cyclic homologies of $A_{n,d}$ are given by \[\HC_i(A_{n,d})=\begin{cases}
        \bk^{C_n^d} & \text{if $i$ is even},\\ 0 & \text{if $i$ is odd}.
    \end{cases}\]
   
\end{Cor}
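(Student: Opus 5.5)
The plan is to feed Theorem \ref{thm:Hoch-homology} into Connes' SBI long exact sequence and show that it splits into short exact pieces. The key structural input is that the quiver of $A_{n,d}$ has no oriented cycles. Recall that for any $\bk$-algebra $A$ there is the long exact sequence
\[
\cdots \to \HH_i(A) \xrightarrow{I} \HC_i(A) \xrightarrow{S} \HC_{i-2}(A) \xrightarrow{B} \HH_{i-1}(A) \to \cdots,
\]
with $\HC_i(A)=0$ for $i<0$ and $\HC_0(A)=\HH_0(A)=A/[A,A]$. By Theorem \ref{thm:Hoch-homology}, $\HH_i(A_{n,d})=0$ for all $i\ge 1$. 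So for each $i\ge 2$ the segment
\[
\HH_i(A_{n,d})=0\to \HC_i(A_{n,d})\xrightarrow{S}\HC_{i-2}(A_{n,d})\to \HH_{i-1}(A_{n,d})=0
\]
shows that $S$ is an isomorphism.

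The degree-one case needs a separate check. The relevant portion of the sequence reads $\HH_1=0\to \HC_1\to \HC_{-1}=0$, so $\HC_1(A_{n,d})=0$. The isomorphisms $S$ then propagate the two base values: $\HC_{2k+1}(A_{n,d})\cong \HC_1(A_{n,d})=0$, and $\HC_{2k}(A_{n,d})\cong \HC_0(A_{n,d})=\HH_0(A_{n,d})\cong \bk^{C_n^d}$.

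The step requiring most care is the identification of $\HH_0(A_{n,d})$ with $\bk^{C_n^d}$, on which everything else rests, though it is already contained in Theorem \ref{thm:Hoch-homology}. To justify it directly, I would observe that since the quiver has no oriented cycles, every nonidempotent path $p$ from $x$ to $y\neq x$ satisfies $p=e_y p - p e_y\in[A,A]$. Moreover, products of basis elements with distinct endpoints commute to zero, so $[A,A]$ is spanned by these paths. It follows that $A/[A,A]$ has as basis the vertex idempotents, whose number is $|\binom{\bf n}{d}|=C_n^d$ by Proposition \ref{prop:equ-def}.

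Alternatively, one can cite \cite{Cibils86} directly: for a quiver algebra with no oriented cycles, cyclic homology is that of the semisimple algebra $\bk^{|Q_0|}$, namely $\bk^{|Q_0|}$ in even degrees and $0$ in odd degrees. This agrees with the computation above.
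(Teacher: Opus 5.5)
Your proposal is correct and follows essentially the paper's route. The paper simply cites Cibils' computation via the SBI sequence of cyclic homology for quiver algebras without oriented cycles \cite{Cibils86}. You make that argument explicit by feeding $\HH_{i}(A_{n,d})=0$ for $i\ge 1$ (Theorem~\ref{thm:Hoch-homology}) into Connes' exact sequence, and your treatment of $\HC_1$ and of $\HC_0=\HH_0\cong\bk^{C_n^d}$ is sound.
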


An immediate consequence of the above results is that  the higher Auslander algebras of type $\A$ satisfy Happel's question and Han's conjecture.

\section*{ Acknowledgements}
The last author is supported by the China Scholarship Council (CSC). The last author gratefully acknowledges the hospitality of the University of Cologne where this work has been completed. 

\bibliographystyle{amsplain}
\bibliography{refs}

\end{document}